\numberwithin{equation}{section}
\def\cleardoublepage{\clearpage\if@twoside \ifodd\c@page\else%
    \hbox{}%
    \thispagestyle{empty}%
    \newpage%
    \if@twocolumn\hbox{}\newpage\fi\fi\fi} 
\newtheorem{thm}{Theorem}[section] 
\newtheorem{cor}{Corollary}[section]
\newtheorem{lem}{Lemma}[section]
\newtheorem{rem}{Remark}[section]
\begin{document}
\title{
{\begin{flushleft}
\vskip 0.45in
{\normalsize\bfseries\textit{ }}
\end{flushleft}
\vskip 0.45in
\bfseries\scshape 
Linear independence of certain numbers in the base-$b$ number system
}}

\thispagestyle{fancy}
\fancyhead{}
\fancyhead[L]{In: Book Title \\ 
Editor: Editor Name, pp. {\thepage-\pageref{lastpage-01}}} 
\fancyhead[R]{ISBN 0000000000  \\
\copyright~2007 Nova Science Publishers, Inc.}
\fancyfoot{}
\renewcommand{\headrulewidth}{0pt}

\author{\bfseries\itshape 
Shintaro Murakami
\thanks{
Hirosaki University, Graduate School of Science and Technology, 
Hirosaki 036-8561, Japan \newline 
e-mail: h20ds203@hirosaki-u.ac.jp},\,\,\,
Yohei Tachiya
\thanks{Hirosaki University, Graduate School of Science and Technology, 
Hirosaki 036-8561, Japan 
\newline 
e-mail: tachiya@hirosaki-u.ac.jp}} 

\date{}
\maketitle

\begin{abstract}
Let $(i,j)\in \mathbb{N}\times \mathbb{N}_{\geq2}$ 
and 
$S_{i,j}$  
be an infinite subset of positive integers including all prime numbers 
in some arithmetic progression. 
In this paper, we prove the linear independence over $\mathbb{Q}$ of the numbers 
\[
1, \quad \sum_{n\in S_{i,j}}^{}\frac{a_{i,j}(n)}{b^{in^j}},\quad  (i,j)\in \mathbb{N}\times \mathbb{N}_{\geq2},
\]
where $b\geq2$ is an integer and  $a_{i,j}(n)$ are bounded nonzero integer-valued functions on $S_{i,j}$. 
Moreover, we also establish a necessary and sufficient condition on the subset 
$\mathcal{A}$ of $\mathbb{N}\times \mathbb{N}_{\geq2}$ 
for the numbers 
\[
1, \quad \sum_{n\in T_{i,j}}^{}\frac{a_{i,j}(n)}{b^{in^j}},\quad  (i,j)\in \mathcal{A}
\]
to be linearly independent over $\mathbb{Q}$ 
for any given infinite subsets $T_{i,j}$ of positive integers. 
Our theorems generalize a result of 
V.~Kumar. 
\end{abstract}
 
\noindent \textbf{Keywords:} Linear independence, base-$b$ number system, 
primes in arithmetic progression\\
\noindent \textbf{AMS Subject Classification:} 11J72.


\pagestyle{fancy}  
\fancyhead{}
\fancyhead[EC]{Shintaro Murakami and Yohei Tachiya}
\fancyhead[EL,OR]{\thepage}
\fancyhead[OC]{Linear independence of certain numbers in the base-$b$ number system}
\fancyfoot{}
\renewcommand\headrulewidth{0.5pt}

\section{Introduction}\label{sec:2}

In 1996, Yu.~V.~Nesterenko~\cite{Nes} showed the lower bound for the transcendence degree of the field generated over $\mathbb{Q}$ by the values of the Eisenstein series. 
Nesterenko's theorem derives a number of remarkable 
transcendence and algebraic independence results for the 
values of various modular functions. 
For example, applying Nesterenko's theorem, D.~Bertrand~\cite{Ber} and independently D.~Duverney, Ke.~Nishioka, Ku.~Nishioka and I.~Shiokawa~\cite{DNNS} 
derived algebraic independence results for the values of the Jacobi theta functions. 
In particular, they proved that the number 
$\sum_{n=1}^{\infty}\alpha^{n^2}$ is transcendental for 
any algebraic number $\alpha$ 
with $0<|\alpha|<1$. 

On the other hand, we are not aware of the transcendence of the number 
$\sum_{n=1}^{\infty}\alpha^{n^k}$ for an integer $k\geq3$.  
Recently, V.~Kumar~\cite{Ku2} obtained the following linear independence result by using 
the properties of uniform distribution modulo $1$ of irrational numbers:

\begin{thm}[{\cite[Theorem~1]{Ku2}}]
\label{thm:0}
Let $k\geq2$, $b\geq2$ and $1\leq a_1<a_2<\cdots<a_m$ be integers such that 
$\sqrt[k]{a_i/a_j}\notin\mathbb{Q}$ for any $i\neq j$. Then the numbers 
\begin{equation}\label{eq:001}
1,\quad \sum_{n=1}^{\infty}\frac{1}{b^{a_1n^k}},\quad 
\sum_{n=1}^{\infty}\frac{1}{b^{a_2n^k}},\quad \dots, \quad 
\sum_{n=1}^{\infty}\frac{1}{b^{a_mn^k}}
\end{equation}
are linearly independent over $\mathbb{Q}$. 
\end{thm}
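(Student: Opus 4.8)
The plan is to prove the contrapositive: assuming a nontrivial rational relation, I derive a contradiction. Suppose $1$ and the $m$ series are linearly dependent over $\Q$. Clearing denominators, there are integers $c_0,\dots,c_m$ with $(c_1,\dots,c_m)\neq0$ and $\sum_{i=1}^m c_i x_i=-c_0\in\Z$, where $x_i:=\sum_{n\geq1}b^{-a_in^k}$. Putting $I=\{i:c_i\neq0\}\neq\emptyset$ and $S:=\sum_{i\in I}c_ix_i$, it suffices to show $S\notin\Q$. The first step is to record the arithmetic consequence of the hypothesis: if $a_in^k=a_jm^k$ with $i\neq j$, then $\sqrt[k]{a_i/a_j}=m/n\in\Q$, contrary to assumption. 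Hence the exponents $\{a_in^k:i\in I,\ n\geq1\}$ are pairwise distinct; listing them increasingly as $e_1<e_2<\cdots$ with weights $d_\ell=c_{\iota(\ell)}$, I get $S=\sum_{\ell\geq1}d_\ell b^{-e_\ell}$, where each $d_\ell$ is a nonzero integer with $|d_\ell|\leq H:=\max_i|c_i|$.

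Next I translate irrationality into a statement about base-$b$ digits. If $S$ were rational, its expansion would be eventually periodic, say with period $\pi$ beyond position $p_0$. Put $W_0:=\lceil\log_b\frac{Hb}{b-1}\rceil$ and $T(t):=\sum_{e_s>t}d_sb^{-(e_s-t)}$. The engine is a local carry estimate: if no exponent lies in $(t,t+L]$, then $|T(t)|\leq\frac{H}{b-1}b^{-L}\leq b^{W_0-L}$, so the digits of $S$ in the block $(t,t+L-W_0]$ are all equal (to $0$ if $T(t)\geq0$, to $b-1$ otherwise). Since the counting function of the exponents is $\sum_{i\in I}\lfloor(X/a_i)^{1/k}\rfloor\ll X^{1/k}$ with $k\geq2$, the gaps $e_{\ell+1}-e_\ell$ are unbounded; taking one larger than $\pi+W_0$ produces a constant digit block of length exceeding $\pi$. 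In a period-$\pi$ eventually periodic sequence a constant block of length $\geq\pi$ forces the entire periodic part to be constant, whence $S\in\Z[1/b]$, i.e. $b^MS\in\Z$ for some $M$.

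It remains to contradict $S\in\Z[1/b]$, and this is where the distribution properties enter and where I expect the real difficulty to lie. I would produce a single exponent $e_{\ell^*}>M$ that is isolated on both sides, with $e_{\ell^*}-e_{\ell^*-1}>W_0$ and $e_{\ell^*+1}-e_{\ell^*}>R$ for a fixed $R>W_0$. Taking $t_0=e_{\ell^*}-W_0-1$, the next exponent above $t_0$ is $e_{\ell^*}$, so $T(t_0)=d_{\ell^*}b^{-(W_0+1)}+\theta$ with $|\theta|\leq\frac{Hb}{b-1}b^{-(R+W_0+1)}<b^{-R}$. But $b^MS\in\Z$ and $t_0>M$ give $b^{t_0}S\in\Z$, hence $T(t_0)\in\Z$; since $|T(t_0)|<1$, it vanishes, forcing $|d_{\ell^*}|=b^{W_0+1}|\theta|\leq b^{W_0-R}<1$, contradicting $d_{\ell^*}\neq0$. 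Thus everything reduces to the existence, arbitrarily far out, of two-sided isolated exponents. This is automatic for the self-gaps within one sequence $a_in^k$, so the only danger is a near-collision $|a_in^k-a_jm^k|\leq R$ across two sequences. Here I would invoke the uniform distribution modulo $1$ of $\big((a_i/a_j)^{1/k}n\big)_n$ (and, when there are $\Q$-linear relations among the finitely many numbers $(a_i/a_j)^{1/k}$, equidistribution on the subtorus they generate) to show that such near-collisions are rare: the number of $\ell$ with $e_\ell\leq X$ lying within $R$ of another exponent is $o(X^{1/k})$, so a positive proportion of exponents, in particular infinitely many beyond $M$, are two-sided $R$-isolated.

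The main obstacle is exactly this counting of near-collisions when $k=2$: unlike $k\geq3$, a crude Diophantine lower bound on $\|(a_i/a_j)^{1/k}n\|$ does not force the relevant gaps to grow for individual $n$, so one must argue on average through equidistribution rather than pointwise. A secondary technical nuisance is the carry/borrow propagation behind the block estimates, since the $d_\ell$ may be negative and as large as $H$; confining each borrow to a window of width $W_0$ is what makes the blocks genuinely constant. Once these two points are secured, combining the constant-block reduction with the isolated-exponent contradiction finishes the argument, and the same scheme — with $a_in^k$ replaced by the appropriate exponent sets — is what I would carry over to the more general theorems stated in the abstract.
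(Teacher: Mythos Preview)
Your plan is sound and would go through: the carry-window bound with $W_0$, the reduction from eventual periodicity to $S\in\mathbb{Z}[1/b]$ via unbounded gaps, and the final two-sided isolation contradiction are all correct. For the isolation step a union bound over the individually equidistributed sequences $(\beta_j n)_n$, $\beta_j=(a_i/a_j)^{1/k}$, already yields positive density of $n$ with every $\|\beta_j n\|$ bounded below, so the subtorus remark is unnecessary. This is essentially Kumar's original argument, which the paper cites but deliberately does not reproduce. The paper instead deduces the statement from its Theorem~\ref{thm:1} by a completely different, constructive mechanism: for each offset $1\le|u|\le N-1$, Lemma~\ref{lem:1} supplies a large prime $p_u$ with $i_0X^{j_0}+u\equiv p_u\pmod{p_u^2}$ solvable; the Chinese remainder theorem together with Dirichlet's theorem on primes in arithmetic progressions then produces a large prime $q$ satisfying all these congruences simultaneously, so each $i_0q^{j_0}+u$ is divisible by a large prime exactly once and hence cannot equal any $ik^j$. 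This manufactures a single exponent with a gap of width $2N-1$ around it, bypassing equidistribution and the hypothesis $\sqrt[k]{a_i/a_j}\notin\mathbb{Q}$ entirely (whence the unconditional Corollary~\ref{cor:2}); moreover, taking $q$ prime forces any $(i,j)$ contributing at $i_0q^{j_0}$ to satisfy $i\mid i_0$ and $j\mid j_0$, which drives an induction on $i+j$ across different exponents $j$. Your route proves exactly the stated theorem; the paper's route buys the removal of the irrationality assumption and the extension to mixed powers and general index sets $S_{i,j}$.
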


The aim of this paper is to extend Theorem~\ref{thm:0} 
by showing  
linear independence results for certain infinite series. 
In particular, 
as an application of our Theorem~\ref{thm:1}, 
we will generalize Theorem~\ref{thm:0} 
even without the condition $\sqrt[k]{a_i/a_j}\notin\mathbb{Q}$ for any $i\neq j$.   
This gives a positive answer to the question of V.~Kumar \cite[Section~4]{Ku2}. 
Moreover, we establish in Theorem~\ref{thm:3} a necessary and sufficient condition 
for a set of certain infinite series to be linearly independent over $\mathbb{Q}$. 
We will find by Theorem~\ref{thm:3} that 
the same conclusion of Theorem~\ref{thm:0} can be achieved in case where $k\geq3$ even if 
the infinite series in \eqref{eq:001} are replaced by any subseries (Corollary~\ref{cor:34}).

Before stating our Theorem~\ref{thm:1}, we prepare some notations. 
Let $\mathbb{N}$ denote the set of positive integers. 
Let $(i,j)\in \mathbb{N}\times \mathbb{N}_{\geq 2}$
and 
$S_{i,j}$ be an 
infinite subset of $\mathbb{N}$ including all prime numbers 
in the arithmetic progression $n\equiv h_{i,j}$ (mod $d_{i,j}$), 
where  $d_{i,j}$ and $h_{i,j}$ are positive integers and relatively prime.  
Note that Dirichlet's theorem  
states that such arithmetic progression contains infinitely many prime numbers. 
Thus, we can choose various infinite sets~$S_{i,j}$ such as 
the sets of all prime numbers 
in arithmetic progressions, all prime numbers, all squarefree integers and all positive integers. 

Let $S_{i,j}$ be the above infinite set and 
$a_{i,j}(n)$ be a nonzero integer-valued function on $S_{i,j}$.  
Define the function
\begin{equation}\label{eq:863}
f_{i,j}(z):=\sum_{n\in S_{i,j}}^{}a_{i,j}(n)z^{in^j},
\end{equation}
which converges in $|z|<1$. 
Since $j\geq2$, the function~\eqref{eq:863} is not rational. 
We establish linear independence results for the values of the functions~\eqref{eq:863} at the rational argument $z=1/b$ with an integer $b\geq2$. 
Recall that an infinite set of numbers is called linearly independent if each one of its finite subsets is linearly 
independent; otherwise it is called linearly dependent. 
Our results are the following. 
\begin{thm}\label{thm:1} 
Let $b\geq2$ be an integer and 
$f_{i,j}(z)$ be the functions defined in \eqref{eq:863}. 
Then the set of  the numbers 
\begin{equation}\label{eq:39746}
1, \qquad f_{i,j}(1/b)=\sum_{n\in S_{i,j}}^{}\frac{a_{i,j}(n)}{b^{in^j}},\quad (i,j)\in \mathbb{N}\times \mathbb{N}_{\geq 2}
\end{equation}
is linearly independent over $\mathbb{Q}$. 
\end{thm}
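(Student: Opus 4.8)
The plan is to reduce to a finite relation and then read off base-$b$ digits. Since linear independence of an infinite set means independence of every finite subset, I would fix a finite $F\subseteq\N\times\N_{\ge2}$ and suppose $c_0+\sum_{(i,j)\in F}c_{i,j}f_{i,j}(1/b)=0$ with $c_0,c_{i,j}\in\Z$ and some $c_{i,j}\ne0$ (clearing denominators; if all $c_{i,j}=0$ then $c_0=0$ and there is nothing to prove). Collecting terms by exponent, this reads $\theta:=\sum_{N\ge1}w_Nb^{-N}=-c_0$, where $w_N=\sum_{(i,j)\in F,\ in^j=N}c_{i,j}a_{i,j}(n)$. Because the $a_{i,j}$ are bounded and each $N$ is represented as $in^j$ in at most $|F|$ ways, the $w_N$ are integers with $|w_N|\le C$ for a fixed constant $C$, supported on the sparse set $\Sigma=\bigcup_{(i,j)\in F}\{in^j\}$, which satisfies $|\Sigma\cap[1,X]|\le|F|X^{1/2}$ since $j\ge2$. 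The goal is to show $\theta\in\Q$ forces every $c_{i,j}=0$.

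First I would prove a density lemma: the nonzero base-$b$ digits of $\theta$ have density zero. Each term $w_Nb^{-N}$ with $|w_N|\le C$ affects digits only in the window $[N-d,N]$, where $d=\lceil\log_b(C+1)\rceil$, and carries and borrows propagate only leftward by at most $d$; grouping the support into clusters of points spaced $\le d+1$ apart, the nonzero digits lie in a union of intervals of total length $O(|\Sigma\cap[1,X]|)=O(X^{1/2})$. Hence if $\theta\in\Q$, its eventually periodic expansion has a period containing no nonzero digit, so $\theta$ has a terminating base-$b$ expansion: there is an $s$ with $\mathrm{digit}_N(\theta)=0$ for all $N>s$.

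The heart is an isolation lemma producing a contradiction with termination. Among the families with $c_{i,j}\ne0$ I would choose one, $(i_0,j_0)$, minimal for the partial order $(i',j')\preceq(i_0,j_0)\iff j'\mid j_0$ and $i_0/i'$ is a perfect $j'$-th power of an integer. For a large prime $p$ in the arithmetic progression defining $S_{i_0,j_0}$ (infinitely many exist by Dirichlet), set $N=i_0p^{j_0}$. Since $p\nmid i_0,i'$, any exact coincidence $i'm^{j'}=N$ forces $m=p^{j_0/j'}m'$ with $(i',j')\preceq(i_0,j_0)$; by minimality every such family other than $(i_0,j_0)$ itself has $c_{i',j'}=0$, so $w_N=c_{i_0,j_0}a_{i_0,j_0}(p)\ne0$. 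It remains to ensure $N$ is isolated, i.e.\ $w_{N'}=0$ for $0<|N'-N|\le W$ with $W=d+2$; equivalently, no family has an index $m$ with $0<|i'm^{j'}-N|\le W$. For each other family there is at most one threatening $m$ once $p$ is large, since the spacing of $\{i'm^{j'}\}$ near $N$ tends to infinity. When $j'\mid j_0$ and $\gamma:=(i_0/i')^{1/j'}$ is rational (so that $(i',j')\not\preceq(i_0,j_0)$ forces the denominator $v\ge2$ of $\gamma$), the quantity $i'm^{j'}-i_0p^{j_0}$ is a nonzero difference of $j'$-th powers whose being $\le W$ in modulus would force $v\mid p$, impossible for large $p$. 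In every remaining case the sequence $(\gamma\,p^{j_0/j'})$ is uniformly distributed modulo $1$ along the primes, so the near-miss condition $\|\gamma\,p^{j_0/j'}\|\ll W\,p^{-(j_0-j_0/j')}$ fails outside a density-zero set of $p$. Removing these finitely many density-zero exceptional sets still leaves infinitely many admissible primes, giving arbitrarily large isolated $N$ with $w_N\ne0$; for such an $N>s$ the flanking zeros prevent any cancellation, so $\mathrm{digit}_N(\theta)\ne0$, contradicting termination. Hence all $c_{i,j}=0$, and then $c_0=0$.

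I expect the isolation lemma to be the main obstacle, and within it the case $j_0=2$: there the near-miss difference $i_0p^2-i'm^2$ can be as small as $O(1)$ infinitely often (a Pell-type phenomenon), so the elementary gap bound fails and one genuinely needs equidistribution of $\{\sqrt{i_0/i'}\,p\}$ modulo $1$ over primes in an arithmetic progression — a Vinogradov-type input combined with Dirichlet's theorem. The second delicate point is verifying that the preorder $\preceq$ captures exactly the exact coincidences for prime-indexed exponents, so that primality cleanly confines all possible cancellation in $w_N$ to the single minimal family.
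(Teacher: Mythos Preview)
Your overall architecture---reduce to a finite relation, isolate an exponent $N=i_0q^{j_0}$ with $w_N\ne0$ and a long two-sided window of vanishing $w_{N'}$, then read off $w_N=0$ from the assumed rationality---is exactly the shape of the paper's proof. Two points diverge, one a genuine gap and one a substantive difference in method.

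\medskip
\textbf{The density/termination detour is dispensable and your justification of it is wrong.} You claim that ``carries and borrows propagate only leftward by at most $d$''; this is false (think of adding $b^{-N}$ to $0.(b\!-\!1)(b\!-\!1)\cdots(b\!-\!1)$). Fortunately the whole step is unnecessary: once you have $w_{N'}=0$ for $0<|N'-N|\le W$ with $W$ large, the tail $\sum_{N'>N+W}w_{N'}b^{-N'}$ is $O(b^{-N-W})$, so from $b^{N-W}\theta\in\Z$ you get $w_N+O(b^{-W})\in b^{W}\Z$, forcing $w_N=0$ directly. This is precisely how the paper argues (its quantities $P_N,Q_N$), with no talk of digits or termination.

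\medskip
\textbf{The isolation step is the real difference.} You propose to find isolated $N$ by a density argument over primes, invoking Vinogradov-type equidistribution of $\gamma p^{j_0/j'}$ modulo~$1$ (and you correctly flag the Pell obstruction at $j_0=2$). This can be made to work, but it imports substantial analytic machinery. The paper avoids all of it by an entirely elementary construction: for each offset $\ell\in\{1,\dots,2N-1\}\setminus\{N\}$ it uses a Hensel-type lemma to pick a large prime $p_\ell$ for which $i_0X^{j_0}+\ell-N\equiv p_\ell\pmod{p_\ell^2}$ is solvable, then uses the Chinese remainder theorem to assemble a residue class modulo $d\prod p_\ell^{2}$ and Dirichlet's theorem to find a large prime $q\equiv h\pmod d$ in that class. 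By design $p_\ell\Vert i_0q^{j_0}+\ell-N$, so this integer cannot be $ik^j$ with $j\ge2$ and small $i$; this gives the isolation window around $i_0q^{j_0}$ with nothing beyond Dirichlet and CRT. Your equidistribution route and the paper's valuation trick both yield the same isolated position, but the paper's is strictly more elementary and sidesteps the $j_0=2$ Pell issue entirely.

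\medskip
Finally, your minimal-element argument under the divisibility preorder is equivalent to the paper's induction on $i+j$: in both cases the point is that $i_0q^{j_0}=ik^j$ with $q$ a large prime forces $j\mid j_0$ and $i\mid i_0$, so the only surviving term at $N$ is $c_{i_0,j_0}a_{i_0,j_0}(q)$ once the smaller pairs have been eliminated.
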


Applying Theorem~\ref{thm:1} with $S_{i,j}:=\mathbb{N}$ and $a_{i,j}(n):=(\pm 1)^n$ ($n\geq1$) for all $(i,j)\in \mathbb{N}\times \mathbb{N}_{\geq 2}$, we have  

\begin{cor}\label{cor:2}
Let $b\geq2$ be an integer. Then the set of the numbers 
\[
1, \qquad \alpha_{i,j}:=\sum_{n=1}^{\infty}\frac{1}{b^{in^j}}\quad 
(i=1,2,\dots,
\,\,\,
j=2,3,\dots)
\]
is linearly independent over $\mathbb{Q}$. 
The same holds for the set of the numbers 
\[
1, \qquad \beta_{i,j}:=\sum_{n=1}^{\infty}\frac{(-1)^n}{b^{in^j}}\quad 
(i=1,2,\dots,
\,\,\,
j=2,3,\dots).
\]
\end{cor}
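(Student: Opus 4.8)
The plan is to obtain the corollary as an immediate specialization of Theorem~\ref{thm:1}; essentially all of the content is the verification that the two proposed coefficient functions satisfy the hypotheses imposed on \eqref{eq:863}.

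First I would take $S_{i,j}:=\mathbb{N}$ for every $(i,j)\in\mathbb{N}\times\mathbb{N}_{\geq2}$. This is an admissible choice: as already noted in the setup, $\mathbb{N}$ is one of the allowed index sets, since it contains every positive integer and hence all prime numbers in the progression $n\equiv1\pmod1$, so that one may take $d_{i,j}=h_{i,j}=1$, which are relatively prime.

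Next I would check the coefficient conditions separately for the two claims. For the $\alpha_{i,j}$ statement I set $a_{i,j}(n):=1$, and for the $\beta_{i,j}$ statement I set $a_{i,j}(n):=(-1)^n$. In either case $a_{i,j}$ is an integer-valued function on $\mathbb{N}$ that never vanishes and is bounded by $1$, so it meets every requirement placed on the coefficients in \eqref{eq:863}. With these choices the value at $z=1/b$ becomes $f_{i,j}(1/b)=\sum_{n=1}^{\infty}a_{i,j}(n)/b^{in^j}$, which equals $\alpha_{i,j}$ in the first case and $\beta_{i,j}$ in the second.

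Finally I would apply Theorem~\ref{thm:1}, which yields the linear independence over $\mathbb{Q}$ of $1$ together with all the numbers $f_{i,j}(1/b)$, $(i,j)\in\mathbb{N}\times\mathbb{N}_{\geq2}$; reading off the two specializations then gives the two asserted statements. I do not expect any genuine difficulty here, since the result is a direct corollary: the only point worth stating explicitly is the admissibility of $S_{i,j}=\mathbb{N}$, which holds trivially because $\mathbb{N}$ contains every prime, so no arithmetic hypotheses on $b$, $i$, or $j$ are needed.
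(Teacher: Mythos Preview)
Your proposal is correct and follows exactly the same route as the paper: the paper derives the corollary in a single line by applying Theorem~\ref{thm:1} with $S_{i,j}:=\mathbb{N}$ and $a_{i,j}(n):=(\pm1)^n$, and your write-up merely spells out the admissibility checks (which are indeed trivial).
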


Note that the first assertion in Corollary~\ref{cor:2} gives an unconditional version of Theorem~\ref{thm:0}. 

\begin{rem}\label{rem:0} 
Let $\alpha_{i,2}$ and $\beta_{i,2}$ be as in Corollary~\ref{cor:2}. 
Then we have the equalities $\alpha_{i,2}=(\vartheta_3(1/b^i)-1)/2$ and $\beta_{i,2}=(\vartheta_4(1/b^i)-1)/2$, where 
\[
\vartheta_3(q):=1+2\sum_{n=1}^{\infty}q^{n^2},\qquad 
\vartheta_4(q):=1+2\sum_{n=1}^{\infty}(-1)^nq^{n^2}
\]
are the Jacobi theta functions defined in $|q|<1$.  
Hence, Corollary~\ref{cor:2} provides linear independence results for the values of the Jacobi theta functions
\begin{equation}\label{theta}
\vartheta_\ell(1/b^i),\quad i=1,2,\dots
\end{equation}
for each $\ell\in\{3,4\}$. 
As mentioned at the beginning of this section, the numbers in \eqref{theta} are all transcendental (cf. \cite{Ber,DNNS}), and hence, so are the numbers $\alpha_{i,2}$, 
$\beta_{i,2}$ $(i=1,2,\dots)$.  
More strongly, any two numbers in the set 
$\{
\alpha_{i,2},\beta_{i,2}
\mid i=1,2,\dots\}$ 
are algebraically independent over $\mathbb{Q}$, 
while any three are not (cf. \cite{ELT, EKT}). 
For example, the explicit algebraic relations 
among the first three numbers 
$\alpha_{i,2}$ $(i=1,2,3)$ and $\beta_{i,2}$ $(i=1,2,3)$, respectively,  can be derived 
from the results in \cite[\S~4.1]{EKT}. 
Recently, C.~Elsner and V.~Kumar \cite{EK} obtained linear independence results 
over the field of algebraic numbers for 
some three numbers in $\{\alpha_{i,2}\mid i\geq1\}$.  
\end{rem}

Moreover, applying Theorem~\ref{thm:1} with the sets $S_{i,j}$ of all prime numbers and all squarefree integers, we have 

\begin{cor}\label{cor:3}
Let $b\geq2$ be an integer. Then the set of the 
numbers 
\[
1, \qquad \gamma_{i,j}:=\sum_{p:{\rm prime}}\frac{1}{b^{ip^j}}\quad 
(i=1,2,\dots,
\,\,\,
j=2,3,\dots)
\]
is linearly independent over $\mathbb{Q}$. The same holds for the set of the numbers 
\[
1, \qquad \sum_{n:{\rm squarefree}}\frac{1}{b^{in^j}}\quad 
(i=1,2,\dots,
\,\,\,
j=2,3,\dots).
\]
\end{cor}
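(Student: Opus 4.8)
The plan is to obtain both assertions as direct specializations of Theorem~\ref{thm:1}; the only work is to verify that the two prescribed choices of index sets and coefficient functions meet the hypotheses imposed there. Recall that Theorem~\ref{thm:1} requires, for each $(i,j)\in\mathbb{N}\times\mathbb{N}_{\geq2}$, that $S_{i,j}$ be an infinite subset of $\mathbb{N}$ containing all prime numbers in some arithmetic progression $n\equiv h_{i,j}\pmod{d_{i,j}}$ with $\gcd(d_{i,j},h_{i,j})=1$, and that $a_{i,j}(n)$ be a nonzero integer-valued function on $S_{i,j}$. Once these are confirmed, the linear independence of $1$ together with the corresponding values $f_{i,j}(1/b)$ follows with no further argument.

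For the first assertion I would take $S_{i,j}:=\{p\mid p\ \text{prime}\}$ and $a_{i,j}(n):=1$ for every $(i,j)$. The set of all primes is infinite and, trivially, contains every prime lying in any fixed arithmetic progression; for instance the choice $d_{i,j}=h_{i,j}=1$ makes the congruence $n\equiv 1\pmod 1$ hold for every integer, so that all primes fall in the progression, and $\gcd(1,1)=1$. The constant function $a_{i,j}(n)=1$ is plainly a nonzero integer-valued function. With these data $f_{i,j}(1/b)=\sum_{p\ \text{prime}}b^{-ip^j}=\gamma_{i,j}$, so Theorem~\ref{thm:1} applies verbatim and yields the linear independence over $\mathbb{Q}$ of $1$ and the $\gamma_{i,j}$.

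For the second assertion I would take $S_{i,j}:=\{n\in\mathbb{N}\mid n\ \text{squarefree}\}$ and again $a_{i,j}(n):=1$. The key observation is that every prime is squarefree, so the set of squarefree integers contains the \emph{entire} set of primes, hence in particular all primes in the progression $n\equiv 1\pmod 1$; it is of course infinite. Thus the hypotheses of Theorem~\ref{thm:1} hold once more, and the values $f_{i,j}(1/b)=\sum_{n\ \text{squarefree}}b^{-in^j}$ together with $1$ are linearly independent over $\mathbb{Q}$.

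The point to emphasize is that there is essentially no obstacle here, and this is precisely by design: the strength of Theorem~\ref{thm:1} lies in requiring $S_{i,j}$ only to capture the primes of a \emph{single} arithmetic progression, not the whole set, and both the primes and the squarefree integers trivially contain a full progression of primes (indeed all primes). The only mild care needed is to exhibit an admissible coprime pair $(d_{i,j},h_{i,j})$; any fixed such pair works, and one may reuse the same progression for every index $(i,j)$, so the verification reduces to the two one-line observations above.
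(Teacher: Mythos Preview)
Your proposal is correct and matches the paper's approach exactly: the paper derives Corollary~\ref{cor:3} in a single sentence by applying Theorem~\ref{thm:1} with $S_{i,j}$ taken to be the set of all primes (respectively, all squarefree integers) and $a_{i,j}(n)\equiv 1$. Your verification that these choices satisfy the hypotheses (via the trivial progression $d_{i,j}=h_{i,j}=1$) is precisely the content the paper leaves implicit.
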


It should be noted that 
D.~H.~Bailey, J.~M.~Borwein, R.~E.~Crandall and C.~Pomerance~\cite{BBCP} 
proved 
that 
the numbers $\alpha_{i,j}$ ($j\geq3$) and $\gamma_{i,j}$ $(j\geq2)$ 
are either transcendental or 
algebraic numbers with degrees at least $j$ and $j+1$, respectively. \\

In Theorem~\ref{thm:1} we can not replace the sets $S_{i,j}$ 
by  any infinite subsets 
of positive integers; indeed, the numbers \eqref{eq:39746} can be \textit{linearly dependent} over $\mathbb{Q}$ 
for suitable infinite sets $S_{i,j}$, e.g. clearly  
$\sum_{n\in 2\mathbb{N}}^{}{b^{-n^2}}=
\sum_{n\in \mathbb{N}}^{}{b^{-4n^2}}
$. 
The following Theorem~\ref{thm:3} gives a necessary and sufficient condition on 
the subset $\mathcal{A}$ of $\mathbb{N}\times\mathbb{N}_{\geq2}$ for 
the numbers in \eqref{eq:397467} to be  linearly independent over $\mathbb{Q}$ 
for arbitrary given infinite sets $T_{i,j}$.

\begin{thm}\label{thm:3}
Let $b\geq2$ be an integer and $\mathcal{A}$ be a subset of $\mathbb{N}\times\mathbb{N}_{\geq2}$. 
Then for any subsets $T_{i,j}$ $((i,j)\in\mathcal{A})$ of $\mathbb{N}$
and for any 
nonzero integer-valued functions $a_{i,j}(n)$ on $T_{i,j}$  
the set of the numbers 
\begin{equation}\label{eq:397467}
1, \qquad \sum_{n\in T_{i,j}}^{}\frac{a_{i,j}(n)}{b^{in^j}},\quad (i,j)\in\mathcal{A}
\end{equation}
is linearly independent over $\mathbb{Q}$  if and only if the following two conditions 
are fulfilled.   

\noindent
{\rm (i)} 
Let $(i_1,j_1), (i_2,j_2)\in\mathcal{A}$ be distinct. Then   
$i_1u^{j_1}\neq i_2v^{j_2}$ for any positive integers $u$ and $v$.   

\noindent
{\rm (ii)}There exists at most one $(i,j)\in\mathcal{A}$ with $j=2$. 
\end{thm}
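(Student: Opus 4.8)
The statement is an equivalence, so the plan is to prove the two implications separately: sufficiency (conditions (i) and (ii) force linear independence for \emph{every} choice of $T_{i,j}$ and $a_{i,j}$) and necessity (if either condition fails, \emph{some} choice of $T_{i,j}$ and $a_{i,j}$ yields a linear dependence). Everything hinges on one Diophantine dichotomy for the exponents: across two distinct series the inequality $0<|i_1n^{j_1}-i_2m^{j_2}|\le G$ has infinitely many solutions exactly when $j_1=j_2=2$ (a Pell phenomenon), and only finitely many otherwise.

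For necessity I argue by contraposition and build explicit dependencies. If condition (i) fails, I fix distinct $(i_1,j_1),(i_2,j_2)\in\mathcal{A}$ and positive integers $u,v$ with $i_1u^{j_1}=i_2v^{j_2}$, and set $u_s=us^{j_2}$, $v_s=vs^{j_1}$ for $s\ge1$; then $i_1u_s^{j_1}=i_2v_s^{j_2}$, so taking $T_{i_1,j_1}=\{u_s\}$, $T_{i_2,j_2}=\{v_s\}$ with $a\equiv1$ makes the two series equal term by term, hence equal. If condition (i) holds but (ii) fails, I fix distinct $(i_1,2),(i_2,2)\in\mathcal{A}$; then $i_1i_2$ is not a perfect square, so $X^2-i_1i_2Y^2=1$ has a fundamental solution $(p,q)$ with $p,q>0$. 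With $e=i_2-i_1>0$ (relabel so $i_2>i_1$) and seed $(n_0,m_0)=(1,1)$ of $i_2m^2-i_1n^2=e$, the automorph $(n,m)\mapsto(pn+i_2qm,\,i_1qn+pm)$ preserves this form and generates infinitely many positive solutions $(n_s,m_s)$; since $i_1n_s^2=i_2m_s^2-e$, one has $b^{-i_1n_s^2}=b^{e}b^{-i_2m_s^2}$, so $T_{i_1,2}=\{n_s\}$, $a_{i_1,2}\equiv1$ and $T_{i_2,2}=\{m_s\}$, $a_{i_2,2}\equiv b^e$ make the two series coincide. In each case the remaining indices receive arbitrary admissible data, producing a nontrivial $\mathbb{Q}$-linear relation.

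For sufficiency I assume (i) and (ii), fix a finite $\mathcal{A}_0\subseteq\mathcal{A}$ and an integer relation $c_0+\sum_{(i,j)\in\mathcal{A}_0}c_{i,j}\sum_{n\in T_{i,j}}a_{i,j}(n)b^{-in^j}=0$ with some $c_{i,j}\neq0$, and rewrite it as $c_0+\sum_{E\in\mathcal{E}}\kappa(E)b^{-E}=0$, where $\mathcal{E}$ is the infinite set of exponents from series with $c_{i,j}\neq0$ and each $\kappa(E)$ is a nonzero integer with $|\kappa(E)|\le M$ (using boundedness of the $a_{i,j}$ and finiteness of $\mathcal{A}_0$). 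Condition (i) makes $E\mapsto(i,j,n)$ well defined, and the separation lemma asserts that for every $G$ only finitely many pairs $E<E'$ in $\mathcal{E}$ satisfy $E'-E\le G$: within one series the gaps satisfy $i((n{+}1)^j-n^j)\ge ijn^{j-1}\to\infty$, while across distinct series the value $E'-E=0$ is excluded by (i) and each value $0<E'-E\le G$ occurs finitely often by Siegel's theorem, since by (ii) at least one exponent is $\ge3$ and the relevant curve has positive genus. Fixing $G$ with $b^{G}>2M$, all but finitely many $E_k\in\mathcal{E}$ are isolated (both neighbouring gaps exceed $G$); for such $k$, multiplying by $b^{E_k}$ shows $N_k:=c_0b^{E_k}+\sum_{l\le k}\kappa(E_l)b^{E_k-E_l}$ is an integer equal to $-b^{E_k}\sum_{l>k}\kappa(E_l)b^{-E_l}$, of absolute value at most $2Mb^{-(E_{k+1}-E_k)}<2Mb^{-G}\le1$, hence $N_k=0$; reducing modulo $b^{G}$ kills every term except $l=k$, giving $\kappa(E_k)\equiv0\pmod{b^{G}}$ and so $\kappa(E_k)=0$, a contradiction.

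The main obstacle is the separation lemma, which is exactly where condition (ii) enters: finiteness of solutions of $0<|i_1n^{j_1}-i_2m^{j_2}|\le G$ fails precisely for $j_1=j_2=2$, where it becomes a Pell equation with infinitely many solutions. This is simultaneously why (ii) is necessary and the obstruction one must rule out for sufficiency. I expect the cleanest route to cite Siegel's theorem on integral points on curves of positive genus (the curves $i_1x^{j_1}-i_2y^{j_2}=k$ with $k\neq0$ have genus $\ge1$ whenever $(j_1,j_2)\neq(2,2)$); a self-contained treatment must verify this genus computation in the small superficially-degenerate cases such as $(2,4)$, where the quartic $i_2y^4+k$ is squarefree for $k\neq0$. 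Finally, the digit-isolation step relies on the $a_{i,j}$ being bounded, consistent with the standing hypotheses of the paper.
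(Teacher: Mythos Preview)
Your proposal is correct and follows essentially the same route as the paper: necessity via explicit Pell-type constructions, and sufficiency via a Diophantine gap lemma (only finitely many integer solutions to $i_1x^{j_1}-i_2y^{j_2}=k$ for each fixed $k\neq0$ when $(j_1,j_2)\neq(2,2)$) feeding into a base-$b$ digit-isolation argument. The only notable difference is the finiteness tool: where you invoke Siegel's theorem on integral points (which, as you acknowledge, requires irreducibility and genus checks in cases like $(2,4)$), the paper instead cites Mahler's result that the greatest prime factor of $ax^m+by^n$ tends to infinity for coprime $x,y$ when $\max(m,n)\ge3$, which handles all cross-series cases uniformly without any geometry; the paper's Pell construction for the failure of (ii) is also slightly cleaner, reading $i_1x^2-i_2(i_1y)^2=i_1$ directly off $x^2-i_1i_2y^2=1$ and thereby producing a dependence with $a\equiv1$ on both sides.
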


Applying Theorem~\ref{thm:3}, we obtain the following Corollary~\ref{cor:34}
 which gives an another generalization of Theorem~\ref{thm:0}. 

\begin{cor}\label{cor:34}
Let $k\geq3$, $b\geq2$ and $1\leq a_1<a_2<\cdots<a_m$ be integers such that 
$\sqrt[k]{a_i/a_j}\notin\mathbb{Q}$ for any $i\neq j$. 
Let $T_i$ $(i=1,2,\dots,m)$ be any 
infinite subsets of $\mathbb{N}$. 
Then the numbers 
\[
1,\quad \sum_{n\in T_1}^{}\frac{1}{b^{a_1n^k}},\quad 
\sum_{n\in T_2}^{}\frac{1}{b^{a_2n^k}},\quad \dots, \quad 
\sum_{n\in T_m}^{}\frac{1}{b^{a_mn^k}}
\]
are linearly independent over $\mathbb{Q}$. 
\end{cor}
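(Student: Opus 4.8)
The plan is to deduce Corollary~\ref{cor:34} directly from Theorem~\ref{thm:3} by specializing the index set $\mathcal{A}$, the coefficient functions, and the sets $T_{i,j}$, and then checking that conditions (i) and (ii) hold under the stated hypotheses. Concretely, I would put $\mathcal{A}:=\{(a_1,k),(a_2,k),\dots,(a_m,k)\}\subseteq\mathbb{N}\times\mathbb{N}_{\geq2}$, which is legitimate since $k\geq3\geq2$ and the $a_s\geq1$ are distinct (so the pairs are distinct); take the constant functions $a_{a_s,k}(n)\equiv1$, which are nonzero integer-valued; and take $T_{a_s,k}:=T_s$ for $s=1,\dots,m$. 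With these choices the numbers in \eqref{eq:397467} coincide exactly with the $m+1$ numbers listed in the corollary, so it suffices to verify the two conditions of Theorem~\ref{thm:3}.

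Condition (ii) is immediate: every element of $\mathcal{A}$ has second coordinate $k\geq3$, hence there is no $(i,j)\in\mathcal{A}$ with $j=2$, and in particular at most one such pair.

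For condition (i), let $(a_s,k)$ and $(a_t,k)$ be distinct elements of $\mathcal{A}$, so $s\neq t$. Suppose, toward a contradiction, that $a_su^k=a_tv^k$ for some positive integers $u,v$. Then $a_s/a_t=(v/u)^k$, so the positive real $k$-th root satisfies $\sqrt[k]{a_s/a_t}=v/u\in\mathbb{Q}$, contradicting the hypothesis $\sqrt[k]{a_i/a_j}\notin\mathbb{Q}$ for $i\neq j$. Hence $a_su^k\neq a_tv^k$ for all positive integers $u,v$, which is precisely condition (i).

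Since both conditions are fulfilled, Theorem~\ref{thm:3} yields the linear independence over $\mathbb{Q}$ of the displayed numbers. I do not expect any genuine obstacle here: the substantive work is already contained in Theorem~\ref{thm:3}, and this corollary is a clean specialization to the case of a common exponent $j=k$ with trivial coefficient functions. The only point requiring a little care is the correct translation of the arithmetic hypothesis $\sqrt[k]{a_i/a_j}\notin\mathbb{Q}$ into the non-representation statement $a_su^k\neq a_tv^k$, which is exactly what condition (i) demands.
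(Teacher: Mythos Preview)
Your argument is correct and is exactly the intended deduction: the paper does not spell out a separate proof of this corollary, presenting it as an immediate application of Theorem~\ref{thm:3}, and your verification of conditions~(i) and~(ii) under the specialization $\mathcal{A}=\{(a_1,k),\dots,(a_m,k)\}$ with constant coefficient functions is precisely what is required.
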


The present paper is organized as follows. 
Theorems~\ref{thm:1} and \ref{thm:3} will be shown in Sections~\ref{sec:2} and \ref{sec:3}, respectively. 
In the proofs, we will prove that the base-$b$ representations of nontrivial linear forms over $\mathbb{Z}$ of the numbers \eqref{eq:39746} and \eqref{eq:397467} 
contain arbitrarily long strings of zero 
without being identically zero from some point on. 
To see this, we consider the system of the simultaneous congruences 
in the proof of Theorem~\ref{thm:1}. 
This method is based on elementary arguments used in the papers 
of S.~Chowla~\cite{Ch} and P.~Erd{\H o}s~\cite{Er}. 
On the other hand, in the proof of Theorem~\ref{thm:3}, we use Mahler's result \cite{Mah} on the finiteness 
of integer solutions $x$ and $y$ for the Diophantine equation $ax^m+by^n=c$.  
These approaches are completely different from those used in the papers of V.~Kumar~\cite{Ku2,Ku1}.    
\section{Proof of Theorem~\ref{thm:1}}\label{sec:2}

We first show the following lemma.  

\begin{lem}\label{lem:1} 
Let $k\geq2$, $u\geq1$ and $v\neq0$ be integers. 
Then there are infinitely many prime numbers $p$ such that the congruence 
$uX^k+v\equiv p\pmod{p^2}$ has an integer solution. 
\end{lem}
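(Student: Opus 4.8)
The plan is to recast the problem as a two-step congruence-solving question: given a prime $p$, we seek $X$ with $f(X)\equiv p\pmod{p^2}$, where $f(X):=uX^k+v$ is a nonconstant integer polynomial (nonconstant since $u\geq1$ and $k\geq2$). Reducing modulo $p$ gives the necessary condition $f(X)\equiv 0\pmod p$, so I would first secure primes $p$ for which $f$ has a root mod $p$, and then \emph{lift} such a root to a solution modulo $p^2$ that hits the prescribed residue $p$ rather than $0$. The point is that the target residue $p\pmod{p^2}$ differs from the naive Hensel target $0$, but since $p\equiv0\pmod p$ this changes nothing at the first stage and only fixes one free parameter at the second stage.

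For the first stage I would invoke the classical elementary fact (in the spirit of the arguments of Chowla and Erd{\H o}s cited in the introduction) that a nonconstant polynomial in $\mathbb{Z}[X]$ has infinitely many prime divisors, i.e. the set of primes $p$ dividing some value $f(X_0)$ is infinite. Discarding the finitely many primes dividing $ukv$, I obtain infinitely many primes $p\nmid ukv$ admitting an integer $X_0$ with $f(X_0)\equiv0\pmod p$. Because $p\nmid v$, any such root satisfies $X_0\not\equiv0\pmod p$ (otherwise $f(X_0)\equiv v\not\equiv0$), and because $p\nmid uk$ as well, the derivative $f'(X_0)=ukX_0^{k-1}$ is a unit modulo $p$.

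For the second stage I would set $X=X_0+pt$ and use the expansion $f(X_0+pt)\equiv f(X_0)+f'(X_0)\,pt\pmod{p^2}$, all higher terms vanishing mod $p^2$. Writing $f(X_0)=pc$, this reads
\[
f(X_0+pt)\equiv p\bigl(c+ukX_0^{k-1}\,t\bigr)\pmod{p^2},
\]
so requiring $f(X)\equiv p\pmod{p^2}$ amounts to the \emph{linear} congruence $c+ukX_0^{k-1}t\equiv1\pmod p$. Since $ukX_0^{k-1}$ is invertible mod $p$, this is uniquely solvable for $t$, producing the desired $X$. Running this over the infinitely many admissible primes $p$ yields the conclusion. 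The one genuine input beyond routine computation is the infinitude of prime divisors of $f$; everything else is an explicit Hensel-type lift. I therefore expect the main obstacle to be purely expository: recording the prime-divisor fact (and verifying the coprimality conditions $p\nmid ukv$ that make the lift valid), rather than any deeper difficulty, since for even $k$ one cannot simply force the $k$-th power map to be surjective mod $p$, and passing through prime divisors of $f$ neatly sidesteps that issue.
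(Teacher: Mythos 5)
Your proposal is correct and follows essentially the same route as the paper's proof: both rely on the infinitude of prime divisors of the nonconstant polynomial $uX^k+v$, check that the derivative $ukX_0^{k-1}$ is a unit modulo $p$ (using $p\nmid ukv$, whence $X_0\not\equiv0\pmod p$ because $v\neq0$), and then perform the identical Hensel-type lift $X=X_0+pt$ with the linear congruence adjusted to hit the residue $p$ rather than $0$ modulo $p^2$.
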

\begin{proof}
Let $g(X):=uX^k+v\in\mathbb{Z}[X]$. 
Then there are infinitely many prime numbers $p$ 
such that $p$ divides $g(m)$ for some integer $m\geq1$; 
in other words, 
the congruence  
\begin{equation}\label{eq:47}
g(X)\equiv0\pmod{p}
\end{equation}
has an integer solution for infinitely many prime numbers $p$. 
Let $p$ be such a prime number with $p>\max\{k,u,|v|\}$ and let $x$ be an integer 
solution of \eqref{eq:47}. 
Then $g(x)\equiv0$ (mod $p$) and  
$g'(x)=kux^{k-1}\not\equiv0$ (mod $p$), since $v\neq0$. 
Hence, there exists an integer $y$ satisfying  
\begin{equation}\label{eq:347}
g'(x)y-1+g(x)/p\equiv0\pmod{p}.
\end{equation}
By \eqref{eq:347} we have   
\[
g(py+x)=u(py+x)^k+v\equiv g'(x)py+g(x)\equiv p\pmod{p^2}.
\]
This shows that  $g(X)\equiv p$ (mod $p^2$) has an integer solution $X=py+x$ and 
the proof of Lemma~\ref{lem:1} is completed. 
\end{proof}

\noindent
\textit{Proof of Theorem~\ref{thm:1}}. 
Let $m\geq2$ be an arbitrary integer. It suffices to show that the set of the numbers 
\begin{equation}\label{eq:3974611}
1, \qquad f_{i,j}(1/b)=\sum_{n\in S_{i,j}}^{}\frac{a_{i,j}(n)}{b^{in^j}}\quad (i=1,2,\dots, m,\,\,j=2,3,\dots,m)
\end{equation}
is linearly independent over $\mathbb{Q}$. 
Define 
\[
\mathcal{A}:=\{(i,j)\mid i=1,2,\dots,m,\,\,j=2,3,\dots,m\}.
\]
We fix $(i_0,j_0)\in\mathcal{A}$. 
Let $S_{i_0,j_0}$ be an infinite subset of positive integers including all prime numbers $p$ 
satisfying $p\equiv h$ (mod $d$), where 
$d:=d_{i_0,j_0}$ and $h:=h_{i_0,j_0}$ are positive integers and relatively prime. 
Let $N$ be an integer sufficiently large. 
By Lemma~\ref{lem:1} for each integer 
$\ell=1,2,\dots,N-1,N+1,\dots,2N-1$ 
there exists a prime number $p_{\ell}>N$ such that   
the congruence 
\begin{equation}\label{eq:2}
i_0X^{j_0}+\ell-N\equiv p_\ell\pmod{p_\ell^2}
\end{equation}
has an integer solution. 
Since  for each $\ell$ there are infinitely many such prime numbers, 
we may assume that the prime numbers $p_\ell$ are distinct. 
Let $x_\ell$ be a fixed integer solution of \eqref{eq:2}.  
Consider the system of the $2N-1$ simultaneous congruences 
\begin{equation}\label{eq:4}
\left\{
\begin{array}{ll}
X\equiv h&(\textrm{mod}\,\, d),\\[0.2em]
X\equiv x_\ell&(\textrm{mod}\,\, p_\ell^2),\quad \ell=1,2,\dots,N-1,N+1,\dots,2N-1.
\end{array}
\right.
\end{equation}
We find by the Chinese remainder theorem that  
there exists the least positive integer solution $x$ 
of 
\eqref{eq:4} and all integer solutions of \eqref{eq:4} are given by 
$\alpha n+x$ $(n\in \mathbb{Z})$, 
where 
\[
\alpha:=d\prod_{\substack{\ell=1\\ \ell\neq N}}^{2N-1}p_\ell^2.
\] 
Then the integers $\alpha$ and $x$ are relatively prime; indeed, if 
$p_\ell\mid x$ for some prime number $p_\ell$, then by \eqref{eq:2} and \eqref{eq:4} 
we have $p_\ell\mid \ell-N$, 
which is impossible by $1\leq |\ell-N|<N<p_\ell$. 
Moreover, by \eqref{eq:4} the integers $d$ and $x$ are relatively prime, 
since so are the integers $d$ and $h$. 
Hence, by Dirichlet's theorem on arithmetic progressions, 
there exits a positive integer $n_0$ such that $q:=\alpha n_0+x$ is a prime number 
sufficiently large. 
Note that $q\in S_{i_0,j_0}$, 
since 
$q\equiv x\equiv h\pmod{d}$. 
By \eqref{eq:2} and \eqref{eq:4} we have 
\begin{equation*}
\label{eq:21}
i_0q^{j_0}+\ell-N
\equiv p_\ell\pmod{p_\ell^2}
\end{equation*}
for each integer $\ell=1,2,\dots,N-1,N+1,\dots,2N-1$. 
This implies that the integer $i_0q^{j_0}+\ell-N$ is divisible by $p_\ell$ exactly once, 
and hence, we obtain 
\begin{equation}\label{eq:7}
i_0q^{j_0}\pm u\notin\{ik^{j}\mid k\in\mathbb{N},\,\,(i,j)\in\mathcal{A}\}
\end{equation}
for every integer $u=1,2,\dots,N-1$, since $p_\ell$ is a prime number sufficiently large.

Let  $c$ and $c_{i,j}$ $((i,j)\in\mathcal{A})$ be integers such that 
\begin{equation}\label{eq:1}
c+\sum_{(i,j)\in\mathcal{A}}c_{i,j}f_{i,j}(1/b)=0,
\end{equation}
where the sum is taken over all $i,j$ with $(i,j)\in\mathcal{A}$. 
Let   
\[
f_{i,j}(1/b)=\sum_{n\in S_{i,j}}^{}\frac{a_{i,j}(n)}{b^{in^{j}}}=\sum_{n=1}^{\infty}
\frac{e_{i,j}(n)}{b^n},
\]
where 
\begin{equation}\label{eq:8}
e_{i,j}(n):=
\left\{
\begin{array}{ll}
a_{i,j}(k)&{\rm if}\,\,n=ik^j\,\,{\rm for \,\,some \,\,integer}\,\,k\in S_{i,j},\\
0&{\rm otherwise}. 
\end{array}
\right.
\end{equation}
Then by \eqref{eq:7} and \eqref{eq:8} 
\begin{equation}\label{eq:789}
e_{i,j}(i_0q^{j_0}\pm u)=0,\quad u=1,2,\dots,N-1
\end{equation}
for any $(i,j)\in\mathcal{A}$. 
Hence, by \eqref{eq:1} and \eqref{eq:789} we obtain 
\begin{align}
P_N&:=
-c-\sum_{n=1}^{i_0q^{j_0}-N}\frac{1}{b^n}
\sum_{(i,j)\in\mathcal{A}}
{c_{i,j}e_{i,j}(n)}
\nonumber\\
&=
\frac{1}{b^{i_0q^{j_0}}}\sum_{(i,j)\in\mathcal{A}}
{c_{i,j}e_{i,j}(i_0q^{j_0})}
+
\sum_{n=i_0q^{j_0}+N}^{\infty}\frac{1}{b^n}
\sum_{(i,j)\in\mathcal{A}}
{c_{i,j}e_{i,j}(n)}.\label{eq:11}
\end{align} 
Clearly, $b^{i_0q^{j_0}-N}P_N$ is an integer. 
On the other hand, by \eqref{eq:11} 
\begin{align*}
|b^{i_0q^{j_0}-N}P_N|&\leq 
b^{-N}C_1\sum_{(i,j)\in\mathcal{A}}|c_{i,j}|+
\sum_{n=i_0q^{j_0}+N}^{\infty}\frac{C_1}{b^{n-i_0q^{j_0}+N}}
\sum_{(i,j)\in\mathcal{A}}{|c_{i,j}|}\nonumber\\
&\leq C_2b^{-N}+2C_2b^{-2N}\to 0\quad (N\to \infty),
\end{align*}
where $C_1:=\max_{i,j}\{|a_{i,j}(n)|\mid n\in S_{i,j}\}$ and $C_2:=C_1\sum_{(i,j)\in\mathcal{A}}|c_{i,j}|$. 
Thus, we obtain $P_N=0$ for every large integer $N$. 
Therefore, by \eqref{eq:11} we find that 
\[
Q_N:=\sum_{(i,j)\in\mathcal{A}}c_{i,j}e_{i,j}(i_0q^{j_0})=
-
\sum_{n=i_0q^{j_0}+N}^{\infty}\frac{1}{b^{n-i_0q^{j_0}}}
\sum_{(i,j)\in\mathcal{A}}
{c_{i,j}e_{i,j}(n)}
\]
is an integer for every large $N$ and 
\[
|Q_N|\leq 2C_2b^{-N}\to 0\quad (N\to \infty). 
\]
This implies that 
\begin{equation}\label{eq:31}
Q_N=\sum_{(i,j)\in\mathcal{A}}c_{i,j}e_{i,j}(i_0q^{j_0})=0
\end{equation}
for every large integer $N$. 
Let $(i,j)\in\mathcal{A}$ be satisfy $i_0q^{j_0}=ik^{j}$ 
for some integer $k\in S_{i,j}$. 
Then we obtain  $i\mid i_0$ 
and 
$j\mid j_0$, since $q$ is a prime number sufficiently large.   
Hence, the equality \eqref{eq:31} is  written as  
\begin{equation}\label{eq:60}
\sum_{
\substack
{(i,j)\in\mathcal{A}\\i\leq i_0,j\leq j_0}
}
c_{i,j}e_{i,j}(i_0q^{j_0})=0.
\end{equation}
Now we show $c_{i,j}=0$ for any $(i,j)\in\mathcal{A}$ by induction on $i+j(\geq3)$. 
We first apply the above arguments to the case where $(i_0,j_0)=(1,2)\in\mathcal{A}$ and 
the set $S_{1,2}$. 
Then  
there exists a prime number $q:=q_{1,2}\in S_{1,2}$ 
satisfying \eqref{eq:60}, so that, 
$c_{1,2}e_{1,2}(q^2)=0$. 
Since $e_{1,2}(q^2)=a_{1,2}(q)\neq0$, we obtain $c_{1,2}=0$. 
Let $M\geq3$ be an integer and suppose that $c_{i,j}=0$ for any $(i,j)\in\mathcal{A}$ with $i+j\leq M$. 
Then, applying the above arguments again to the case where $(i_0,j_0)\in\mathcal{A}$ with $i_0+j_0=M+1$ 
and the set $S_{i_0,j_0}$,  
we obtain \eqref{eq:60} for some prime number $q:=q_{i_0,j_0}\in S_{i_0,j_0}$. 
Hence, by the induction hypothesis 
\[
0=\sum_{
\substack
{(i,j)\in\mathcal{A}\\i\leq i_0,j\leq j_0}
}
c_{i,j}e_{i,j}(i_0q^{j_0})
=c_{i_0,j_0}e_{i_0,j_0}(i_0q^{j_0}).
\]
Since $e_{i_0,j_0}(i_0q^{j_0})=a_{i_0,j_0}(q)\neq0$, we have $c_{i_0,j_0}=0$. 
Thus, we obtain $c_{i,j}~=0$ for every $(i,j)\in\mathcal{A}$, and so $c=0$ by \eqref{eq:1}.  
The proof of Theorem~\ref{thm:1} is completed. \qed 

\section{Proof of Theorem~\ref{thm:3}}\label{sec:3}
We first show that if both conditions~(i) and (ii) do not hold, 
then  the numbers in~\eqref{eq:397467} 
can be {linearly dependent} over $\mathbb{Q}$ for 
suitable infinite sets $T_{i,j}$ and the constant functions $a_{i,j}(n)\equiv1$ $(n\in T_{i,j})$. 
If condition~(i) does not hold, then there exit distinct pairs $(i_1,j_1),(i_2,j_2)\in\mathcal{A}$ and positive integers $u,v$ such that $i_1u^{j_1}=i_2v^{j_2}$. 
Hence, putting $T_1:=\{u2^{j_2m}\mid m\in\mathbb{N}\}$ and $T_2:=\{v2^{j_1m}\mid m\in\mathbb{N}\}$, 
we obtain the equality 
$\sum_{n\in T_1}^{}{b^{-i_1n^{j_1}}}=\sum_{n\in T_2}^{}{b^{-i_2n^{j_2}}}.
$
Next we assume that condition~(i) holds while condition~(ii) does not. 
Then there exist $(i_1,2),(i_2,2)\in\mathcal{A}$ such that $i_1i_2$ is not square. 
Consider the set of positive integer solutions of the Pell equation 
$x^2-i_1i_2y^2=1$:
\[
\mathcal{S}:=\left\{
(x,y)\in\mathbb{N}^2\mid x^2-i_1i_2y^2=1\right\}. 
\]
As is well known, $\mathcal{S}$ is an infinite set. 
Hence, noting that $i_1x^2-i_2(i_1y)^2=i_1$ for $(x,y)\in\mathcal{S}$, we have  
\[
\sum_{n\in T_1}^{}\frac{1}{b^{i_1n^2}}=\sum_{n\in T_2}^{}\frac{1}{b^{i_2n^2+i_1}}=
\frac{1}{b^{i_1}}\sum_{n\in T_2}^{}\frac{1}{b^{i_2n^2}},
\]
where $T_1:=\{x\mid (x,y)\in\mathcal{S}\}$ and 
$T_2:=\{i_1y\mid (x,y)\in\mathcal{S}\}$.  
Thus, our assertion is proved.

In what follows, we assume that both conditions (i) and (ii) hold. 
We fix $(i_0,j_0)\in \mathcal{A}$ and let $N\geq1$ be an integer sufficiently large. 
Consider the Diophantine equations
\begin{equation}\label{eq:0874}
i_0x^{j_0}-iy^j=\pm u,
\end{equation}
where $(i,j)\in\mathcal{A}$ and $u=1,2,\dots,N-1$. 
If $j_0=j=~2$, we have $i_0=i$  by condition~(ii). Then clearly 
the equation \eqref{eq:0874} has only finitely many integer solutions $x$ and $y$. 
Let $\max\{j_0,j\}\geq 3$. Then K.~Mahler~\cite{Mah} proved that  
the greatest prime factor of $i_0x^{j_0}-iy^j$ tends to infinity 
as $\max \{|x|,|y|\}\to\infty$ with relatively prime integers $x$ and $y$. 
Hence, also in this case, the equation 
\eqref{eq:0874} has only finitely many integer solutions $x$ and $y$. 
Thus, there exists a positive integer $M$ such that 
\begin{equation}\label{eq:8462}
i_0 t^{j_0}\pm u\notin\{
ik^{j}\mid k\in\mathbb{N}, (i,j)\in\mathcal{A}
\}
\end{equation}
holds for every integer $t>M$ and 
for all integers $u=1,2,\dots,N-1$. 

Let $c$ and $c_{i,j}$ $((i,j)\in\mathcal{A})$ be integers such that 
\begin{equation}\label{eq:1123}
c+\sum_{(i,j)\in\mathcal{A}}^{}c_{i,j}\delta_{i,j}=0,
\end{equation}
where  
\[
\delta_{i,j}:=\sum_{n\in T_{i,j}}^{}
\frac{a_{i,j}(n)}{b^{in^{j}}}=\sum_{n=1}^{\infty}
\frac{e_{i,j}(n)}{b^n}
\]
and 
\begin{equation}\label{eq:8123}
e_{i,j}(n):=
\left\{
\begin{array}{ll}
a_{i,j}(k)&{\rm if}\,\,n=ik^j\,\,{\rm for \,\,some \,\,integer}\,\,k\in T_{i,j},\\
0&{\rm otherwise}. 
\end{array}
\right.
\end{equation}
Let $t\in T_{i_0,j_0}$ $(t>M)$ be an integer satisfying \eqref{eq:8462}. 
Then 
by \eqref{eq:8462} and \eqref{eq:8123} 
\begin{equation*}\label{eq:78945}
e_{i,j}(i_0t^{j_0}\pm u)=0,\quad u=1,2,\dots,N-1
\end{equation*}
for any $(i,j)\in\mathcal{A}$. 
Hence, similarly as in the proof of Theorem~\ref{thm:1}, we can derive  
\begin{equation}\label{eq:745}
\sum_{(i,j)\in\mathcal{A}}c_{i,j}e_{i,j}(i_0t^{j_0})=0
\end{equation}
for every large integer $N$. 
Clearly, we have $e_{i_0,j_0}(i_0t^{j_0})=a_{i_0,j_0}(t)\neq0$. 
Moreover, if  $(i,j)\neq(i_0,j_0)$, then by condition~(i) we have 
$i_0t^{j_0}\neq iv^j$ for every positive integer $v$, 
and so $e_{i,j}(i_0t^{j_0})=0$. 
Therefore, by \eqref{eq:745} we obtain $c_{i_0,j_0}=0$. 
Since the integer pair $(i_0,j_0)$ is chosen arbitrarily, 
we obtain $c_{i,j}=0$ for every $(i,j)\in\mathcal{A}$, and hence, $c=0$ by \eqref{eq:1123}.  
Theorem~\ref{thm:3} is proved. \qed  

\subsection*{Acknowledgements}
The authors would like to express their sincere gratitude to Professor Hajime Kaneko for 
valuable comments and suggestions on this manuscript. 
The authors are also grateful to Professor Takafumi Miyazaki for 
pointing out the paper of K.~Mahler~\cite{Mah}. 
This work was partly supported 
by JSPS KAKENHI Grant Number   
JP18K03201. 


\begin{thebibliography}{99}

\bibitem{BBCP}
D. H. Bailey, J. M. Borwein, R. E. Crandall and C. Pomerance, 
\textit{On the binary expansions of algebraic numbers}, 
J. Th\'eor. Nombres Bordeaux, \textbf{16} (2004), 487--518.


\bibitem{Ber} 
D.~Bertrand, \textit{Theta functions and transcendence}, 
Ramanujan J. \textbf{1} (1997), 339--350.











\bibitem{Ch} 
S.~Chowla,  \textit{
On series of the Lambert type which
assume irrational values for rational values of the argument}, 
{Proc. Natl. Inst. Sci. India} Part A \textbf{13} (1947), 171--173.



\bibitem{DNNS} 
D.~Duverney, Ke.~Nishioka, Ku.~Nishioka and I.~Shiokawa,
\textit{Transcendence of Jacobi's theta series}, 
{Proc. Japan Acad. Ser. A Math. Sci.} 
\textbf{72} (1996), 202--203.

\bibitem{ELT}
C.~Elsner, F.~Luca and Y.~Tachiya, 
\textit{Algebraic results for the values $\vartheta_3(m\tau)$ and 
$\vartheta_3(n\tau)$ of the Jacobi theta-constant}, 
Mosc. J. Comb. Number Theory \textbf{8} (2019), 71--79.

\bibitem{EKT} 
C.~Elsner, M.~Kaneko and Y.~Tachiya, 
\textit{Algebraic independence
results for the values of the theta-constants and some identities}, 
{J.~Ramanujan Math. Soc.} \textbf{35} (2020), 71--80.



\bibitem{EK}
C.~Elsner and V.~Kumar, 
\textit{On linear forms in Jacobi theta-constants}, 
arXiv: 1911.06513. 

\bibitem{Er} 
P.~Erd\H{o}s, \textit{On arithmetical properties of Lambert
series}, 
{J. Indian Math. Soc. (N.S.)} \textbf{12} (1948), 63--66.



\bibitem{Ku2} 
V.~Kumar, \textit{Linear independence of certain numbers}, 
Arch. Math. (Basel) \textbf{112}  (2019), 377--385.


\bibitem{Ku1}
V. Kumar, \textit{Linear independence of certain numbers}, 
J. Ramanujan Math. Soc. {\bf  35}  (2020), 17--22.



\bibitem{Mah}
K.~Mahler, \textit{On the greatest prime factor of $ax^m+by^n$},
Nieuw Arch. Wisk. \textbf{1} (1953), 113--122.


\bibitem{Nes} Yu.~V.~Nesterenko, 
\textit{Modular functions and transcendence
questions}, {Mat. Sb.} \textbf{187} (1996) 65--96; 
{English transl.
Sb. Math.} \textbf{187} 1319--1348.


\end{thebibliography}
\end{document}